\numberwithin{equation}{section}
\newtheorem{theorem}{Theorem}[section]
\newtheorem{corollary}[theorem]{Corollary}
\newtheorem{lemma}[theorem]{Lemma}
\newtheorem{question}[theorem]{Question}
\newproof{proof}{Proof}
\begin{document}

\begin{frontmatter}

\title{The inverse along a product and its applications}

\author[1,2]{Huihui Zhu}
\ead{ahzhh08@sina.com}
\author[2]{Pedro Patr\'{i}cio}
\ead{pedro@math.uminho.pt}
\author[1]{Jianlong Chen\corref{cor}}
\ead{jlchen@seu.edu.cn}
\cortext[cor]{Corresponding author}

\address[1]{Department of Mathematics, Southeast University, Nanjing 210096, China.}
\address[2]{CMAT-Centro de Matem\'{a}tica, Universidade do Minho, Braga 4710-057, Portugal.}

\begin{abstract}
 In this paper, we study the recently defined notion of the inverse along an element. An existence criterion for the inverse along
 a product is given in a ring. As applications, we present the equivalent conditions for the existence and expressions of the inverse along a matrix.
\end{abstract}

\begin{keyword}

Von Neumann regularity \sep Inverse along an element \sep Green's relations \sep Matrices over a ring

\MSC[2010] 15A09 \sep 20H25

\end{keyword}

\end{frontmatter}

%% \linenumbers

%% main text
\section { \bf Introduction}

In this paper, $R$ is an associative ring with unity 1. An element $a\in R$ is (von Neumann) regular if there exists $x\in R$ such that $axa=a$. Such $x$ is called an inner inverse of $a$, denoted by $a^{-}$. An arbitrary inner inverse of $a$ is denoted by $a^{(1)}$. We call $b$ an outer inverse of $a$ provided that $bab=b$. If $b$ is both an inner and an outer inverse of $a$, then it is a reflexive inverse of $a$, and is denoted by $a^{+}$.

Given a semigroup $S$, $S^1$ denotes the monoid generated by $S$. Following Green \cite{Green}, Green's preorders and relations in a semigroup are defined by

$a\leq_\mathcal{L}b \Leftrightarrow S^1a \subset S^1b \Leftrightarrow$ there exists $x\in S^1$ such that $a=xb$.

$a\leq_\mathcal{R}b \Leftrightarrow aS^1 \subset bS^1 \Leftrightarrow$  there exists $x\in S^1$ such that $a=bx$.

$a\leq_\mathcal{H}b \Leftrightarrow a\leq_\mathcal{L}b ~~{\rm and}~~ a\leq_\mathcal{R}b$.

$a\mathcal{L}b\Leftrightarrow S^1a = S^1b \Leftrightarrow$ there exist $x,y\in S^1$ such that $a=xb$ and $b=ya$.

$a\mathcal{R}b\Leftrightarrow aS^1 = bS^1 \Leftrightarrow$ there exist $x,y\in S^1$ such that $a=bx$ and $b=ay$.

$a\mathcal{H}b \Leftrightarrow a\mathcal{L}b ~~{\rm and}~~ a\mathcal{R}b$.

Recently, Mary \cite{Mary} introduced the notion of the inverse along an element that is based on Green's relation in a semigroup $S$. Given $a,d\in S$, an element $a\in S$ is invertible along $d$ \cite{Mary} if there exists $b$ such that $dab=d=bad$ and $b\leq_\mathcal{H}d$. The element $b$ above is unique if it exists, and is denoted by $a^{\parallel d}$. Recall that $a^{\parallel d}$ exists implies that $d$ is regular. Later, Mary and Patr\'{i}cio \cite{Mary and Patricio} proved that $a$ is invertible along $d$ if and only if $d\mathcal{H}dad$, which gave a new existence criterion for the inverse along an element. Further, given a regular element $d$, they \cite{Mary and Patricio, Mary and Patricio 2} characterized the existence of $a^{\parallel d}$ by means of a unit and $d^{-}$ in a ring.  Moreover, the representation of $a^{\parallel d}$ is given. As applications, they \cite{Mary and Patricio 2} derived the equivalent conditions for the existence and the formula of the inverse along a regular lower triangular matrix. More results on the inverse along an element can be found in mathematical literature \cite{Mary E, Serbia}.

Motivated by papers \cite{Mary and Patricio, Mary and Patricio 2}, we investigate the inverse along a product $pmq$ ($m$ is regular) in a ring, extending the results in \cite{Mary and Patricio, Mary and Patricio 2}. As applications, the inverse along a regular matrix
$\begin{bmatrix}
              d_1 & d_3 \\
              d_2 & d_4
        \end{bmatrix}$ \normalsize is given under some conditions.

\section{The inverse along a product $pmq$}

In this section, we begin with some lemmas which play important roles in the sequel.

\begin{lemma} \label{Jlemma} Given $a, b\in R$, then $1+ab$ is invertible if and only if $1+ba$ is invertible. Moreover, $(1+ba)^{-1} = 1-b(1+ab)^{-1}a$.
\end{lemma}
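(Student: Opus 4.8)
The statement is the classical Jacobson-type lemma, so the plan is to argue one implication by an explicit construction and then obtain the other by symmetry. Assume first that $1+ab$ is invertible and write $u=(1+ab)^{-1}$, so that the two identities $u+abu=1$ and $u+uab=1$ hold. Motivated by the ``resolvent'' heuristic $(1+ba)^{-1}=\sum(-1)^n(ba)^n = 1-b\bigl(\sum(-1)^n(ab)^n\bigr)a$, the natural candidate for the inverse of $1+ba$ is $v:=1-bua$.

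The next step is to verify directly that $v$ is a two-sided inverse of $1+ba$. Expanding,
\[
(1+ba)v=(1+ba)(1-bua)=1+ba-bua-babua=1+ba-b(u+abu)a=1+ba-b\cdot 1\cdot a=1,
\]
where the middle step uses $u+abu=1$. Symmetrically,
\[
v(1+ba)=(1-bua)(1+ba)=1+ba-bua-buaba=1+ba-b(u+uab)a=1,
\]
this time invoking $u+uab=1$. Hence $1+ba$ is invertible with $(1+ba)^{-1}=1-bua=1-b(1+ab)^{-1}a$, which is exactly the asserted formula.

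For the converse, observe that the roles of $a$ and $b$ are interchangeable: if $1+ba$ is invertible, apply what has just been proved with $a$ and $b$ swapped to conclude that $1+ab$ is invertible. This also shows the ``moreover'' formula is consistent, since it is self-dual under $a\leftrightarrow b$ together with $(1+ab)\leftrightarrow(1+ba)$. I do not anticipate any genuine obstacle here; the only thing to be careful about is keeping the one-sided identities $u+abu=1$ and $u+uab=1$ straight so that each of the two verifications uses the correct one, and making sure no commutativity is silently assumed.
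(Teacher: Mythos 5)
Your proof is correct: the verification $(1+ba)(1-bua)=1+ba-b(u+abu)a=1$ and its mirror image using $u+uab=1$ are exactly right, and the converse by the symmetry $a\leftrightarrow b$ is legitimate. The paper itself offers no proof, merely citing the statement as Jacobson's Lemma, so your argument is the standard direct verification that such a citation implicitly relies on; there is nothing to reconcile.
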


Lemma \ref{Jlemma} is known as the Jacobson's Lemma (see e.g. \cite{Jacobson}).

\begin{lemma}\label{eRe} {\rm (\cite[Theorem 1]{Patricio and Puystjens 2})} Let $R$ be a ring and $e$ an idempotent in $R$. Then $exe+1-e$ is invertible in $R$ if and only if $exe$ is invertible in $eRe$.
\end{lemma}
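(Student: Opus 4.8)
The plan is to prove both implications constructively, exhibiting the inverse explicitly in each direction and relying on nothing beyond the single identity $e^{2}=e$ (so that $(1-e)e=e(1-e)=0$ and $(1-e)^{2}=1-e$). Throughout I keep in mind that $eRe$ is a ring with unity $e$, so that ``$exe$ is invertible in $eRe$'' means precisely that there is $y$ with $y=eye$ and $(exe)y=e=y(exe)$.

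For the direction ``$exe$ invertible in $eRe$ $\Rightarrow$ $exe+1-e$ invertible in $R$'', let $y$ be the inverse of $exe$ in $eRe$, and I would test the natural candidate $y+1-e$. Expanding $(exe+1-e)(y+1-e)$ and using $exe\cdot e=exe$, $e\cdot y=y$, $(1-e)y=0$ and $(1-e)^{2}=1-e$, all the cross terms vanish and the product collapses to $(exe)y+(1-e)=e+(1-e)=1$; the product $(y+1-e)(exe+1-e)$ is handled symmetrically. Hence $(exe+1-e)^{-1}=y+1-e$.

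For the converse, suppose $u\in R$ satisfies $u(exe+1-e)=(exe+1-e)u=1$. Since $e(exe+1-e)=exe=(exe+1-e)e$, multiplying these two relations by $e$ on the appropriate side gives $(exe)u=e=u(exe)$; multiplying once more by $e$ on the other side and inserting an extra factor $e$ (legitimate because $exe=e\cdot exe=exe\cdot e$) yields $(exe)(eue)=e=(eue)(exe)$ with $eue\in eRe$. Thus $eue$ is the inverse of $exe$ in the corner ring $eRe$.

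I do not anticipate a genuine obstacle here: the content is entirely careful bookkeeping of the idempotent. The one spot demanding attention is the converse, where I must ensure that the element extracted from $u$ really lies in $eRe$ — that is, that it is $eue$ rather than merely $ue$ or $eu$ — and that each ``vanishing'' term in the four expansions is genuinely annihilated using $e^{2}=e$; I would check all four products line by line.
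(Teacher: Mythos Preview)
Your proof is correct in every detail: both directions are handled cleanly, and the only point that could have gone wrong --- making sure in the converse that the inverse actually lives in $eRe$ --- is dealt with properly by passing from $u$ to $eue$.

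There is nothing to compare against, however: the paper does not prove this lemma at all but merely quotes it from \cite[Theorem~1]{Patricio and Puystjens 2}. Your argument is the standard direct verification and is exactly what one would supply if asked to fill in the missing proof.
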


The next theorem, a main result of this paper, gives an existence criterion of the inverse along a product $pmq$ in a ring.

\begin{theorem} \label{mary inverse} Let $p,a,q,m\in R$ with $m$ regular. If $m\leq_\mathcal{L}pm$ and $m\leq_\mathcal{R}mq$, then the following conditions are equivalent

\emph{(i)} $a$ is invertible along $pmq$.

\emph{(ii)} $u=mqap +1- mm^{(1)}$ is invertible.

\emph{(iii)} $v=qapm+1-m^{(1)}m$ is invertible.\\

In this case, $$a^{\parallel pmq}=pu^{-1}mq=pmv^{-1}q.$$
\end{theorem}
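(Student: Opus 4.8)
The plan is to reduce the whole statement to the invertibility of the inner factor $qap$ along the regular element $m$, for which the answer is already known, and then translate back. First, since $pm=p\cdot m$ and $mq=m\cdot q$ force $pm\leq_\mathcal{L}m$ and $mq\leq_\mathcal{R}m$ for free, the hypotheses are equivalent to $m\,\mathcal{L}\,pm$ and $m\,\mathcal{R}\,mq$; in particular there are $s,t\in R$ with $m=spm$ and $m=mqt$. The core claim is: under these hypotheses, $a$ is invertible along $pmq$ if and only if $qap$ is invertible along $m$, and then $a^{\parallel pmq}=p\,(qap)^{\parallel m}\,q$. For the ``if'' direction I would check this directly. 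Put $c=(qap)^{\parallel m}$ and $b=pcq$. Then $(pmq)ab=p\bigl(m(qap)c\bigr)q=pmq$ and $ba(pmq)=p\bigl(c(qap)m\bigr)q=pmq$; and writing $c=ym$ (from $c\leq_\mathcal{L}m$) and $c=mz$ (from $c\leq_\mathcal{R}m$) and substituting $m=spm$, $m=mqt$ gives $b=(pys)(pmq)$ and $b=(pmq)(tzq)$, so $b\leq_\mathcal{H}pmq$. By uniqueness of the inverse along an element, $b=a^{\parallel pmq}$.

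For the ``only if'' direction I would use the criterion of \cite{Mary and Patricio}: $a$ is invertible along $pmq$ iff $pmq\,\mathcal{H}\,(pmq)a(pmq)=pm(qap)mq$, and it suffices to deduce from this that $m\,\mathcal{H}\,m(qap)m$, since the same criterion then makes $qap$ invertible along $m$. Write $g=qap$. Using $mqR=mR$, the $\mathcal{R}$-equality $pmqR=pmgmqR$ collapses to $pmR=pmgmR$, and left-multiplying by $s$ (with $spm=m$) gives $mR=mgmR$. Dually, using $Rpm=Rm$, the $\mathcal{L}$-equality $Rpmq=Rpmgmq$ collapses to $Rmq=Rmgmq$, and right-multiplying by $t$ (with $mqt=m$) gives $Rm=Rmgm$. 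Hence $m\,\mathcal{H}\,mgm$, as required.

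Once the reduction is in hand, the three equivalences and the formula are immediate from the known description of the inverse along the regular element $m$ applied to $qap$ \cite{Mary and Patricio 2}: $(qap)^{\parallel m}$ exists iff $u=mqap+1-mm^{(1)}$ is a unit iff $v=qapm+1-m^{(1)}m$ is a unit, and then $(qap)^{\parallel m}=u^{-1}m=mv^{-1}$, so $a^{\parallel pmq}=p(qap)^{\parallel m}q=pu^{-1}mq=pmv^{-1}q$. (Here ``$u$ unit $\Leftrightarrow v$ unit'' is Lemma \ref{Jlemma} applied to $1+m(qap-m^{(1)})$, and the equivalence with the existence of $(qap)^{\parallel m}$ uses Lemma \ref{eRe}. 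One can also avoid the detour through $qap$: check that $tm^{(1)}s$ is an inner inverse of $pmq$, so $a^{\parallel pmq}$ exists iff $1-pmm^{(1)}s+pmqa$ is a unit; move $p$ over by Lemma \ref{Jlemma} to reach $1-mm^{(1)}sp+mqap$; and use $1-mm^{(1)}sp+mqap=(1-mm^{(1)}sp+mm^{(1)})u$, where $1-mm^{(1)}sp+mm^{(1)}$ is a unit with inverse $1-mm^{(1)}+mm^{(1)}sp$.) The step I expect to be the main obstacle is the ``only if'' reduction: the bookkeeping for the one-sided congruence properties of $\mathcal{R}$ and $\mathcal{L}$ is fiddly, and one closes the $\mathcal{R}$-statement with the ``$\mathcal{L}$-type'' relation $m=spm$ and the $\mathcal{L}$-statement with the ``$\mathcal{R}$-type'' relation $m=mqt$, which is the reverse of the naive pairing. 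Everything else is routine computation or citation.
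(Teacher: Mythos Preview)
Your argument is correct. The route you take is genuinely different from the paper's, and it is worth recording the contrast.

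You isolate a clean reduction lemma: under the hypotheses, $a$ is invertible along $pmq$ if and only if $qap$ is invertible along $m$, with $a^{\parallel pmq}=p\,(qap)^{\parallel m}\,q$. Once that is established, (i)$\Leftrightarrow$(ii)$\Leftrightarrow$(iii) and the formula are a direct quotation of the known scalar result \cite{Mary and Patricio, Mary and Patricio 2} applied to the pair $(qap,m)$. The paper does not state this reduction; instead, for (i)$\Rightarrow$(ii) it starts from $pmq\,\mathcal{H}\,pmqapmq$, strips off $p$ and $q$ using the elements $p',q'$ with $p'pm=m=mqq'$, and then shows by hand that $mqap\,mm^{(1)}$ is a unit in the corner ring $mm^{(1)}Rmm^{(1)}$, invoking Lemma~\ref{eRe} and then Lemma~\ref{Jlemma}. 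For (ii)$\Rightarrow$(i) it verifies directly that $b=pu^{-1}mq=pmv^{-1}q$ is the inverse along $pmq$. In effect the paper re-runs the proof of Corollary~\ref{m and p theorem} in situ, whereas you factor through it.

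What each buys: your approach yields a reusable standalone lemma (the $pmq\leftrightarrow m$ transfer) and keeps the unit/corner-ring computations confined to the already-proved case; the paper's approach is more self-contained in that it does not rely on Corollary~\ref{m and p theorem} as a black box. Your parenthetical alternative --- observing that $tm^{(1)}s$ is an inner inverse of $pmq$ and then peeling off $p$ by Lemma~\ref{Jlemma} and a unit factor --- is closer in spirit to the paper's direct attack, and the identity $1-mm^{(1)}sp+mqap=(1-mm^{(1)}sp+mm^{(1)})u$ you give (with the stated inverse for the first factor) checks out using $spm=m$ and hence $mm^{(1)}sp\,mm^{(1)}=mm^{(1)}$.

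One cosmetic remark: in your ``only if'' reduction you describe the passage from $pmR=pmgmR$ to $mR=mgmR$ as ``left-multiplying by $s$''. That is fine as set multiplication, but it may read more transparently if phrased elementwise: from $pm=pmgm\cdot r$ deduce $m=spm=spmgmr=mgmr$, and from $pmgm=pm\cdot r'$ deduce $mgm=mr'$; similarly on the $\mathcal{L}$ side with $t$.
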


\begin{proof} It follows from Lemma \ref{Jlemma} that $(ii)\Leftrightarrow(iii)$. Next, it is sufficient to prove $(i)\Leftrightarrow(ii)$.

$(i)\Rightarrow(ii)$ Suppose that $a$ is invertible along $pmq$. From $m\leq_\mathcal{L}pm$ and $m\leq_\mathcal{R}mq$, then there exist $p'$ and $q'$ such that $p'pm=m=mqq'$. In view of \cite[Theorem 2.2]{Mary and Patricio}, we know that $a$ is invertible along $pmq$ if and only if $pmq \mathcal{H} pmqapmq$. There are $x,y\in R$ such that $$pmq=xpmqapmq=pmqapmqy.\eqno(1)$$
Multiplying the above equation (1) by $p'$ on the left yields
 \begin{center}
 $mq=mqapmqy$.
\end{center}
Multiplying the above equation (1) by $q'$ on the right yields
\begin{center}
$pm=xpmqapm$.
\end{center}
 Hence, $$mqapmm^{(1)}(mqyq'm^{(1)}mm^{(1)})=mm^{(1)}=(mm^{(1)}p'xpmm^{(1)})mqapmm^{(1)}.$$
The equalities above show that $mqapmm^{(1)}$ is invertible in $mm^{(1)}Rmm^{(1)}$.
By Lemma \ref{eRe}, $mqapmm^{(1)} +1- mm^{(1)}$ is invertible in $R$. Again, Lemma \ref{Jlemma} ensures that $u=mqap +1- mm^{(1)}$ is invertible.

$(ii)\Rightarrow(i)$ Suppose that $u$, therefore $v$ are invertible. From $um=mv=mqapm$, it follows that $pmq=pu^{-1}mqapmq=pmqapmv^{-1}q$ and $pu^{-1}mq=pmv^{-1}q$.
Pose $b=pu^{-1}mq=pmv^{-1}q$, then $b\leq_\mathcal{H}pmq$ since $pu^{-1}mq=pu^{-1}p'pmq=pmqq'v^{-1}q$.

Hence, $a$ is invertible along $pmq$. Moreover,
\begin{center}
$a^{\parallel pmq}=pu^{-1}mq=pmv^{-1}q$.
\end{center}

 The proof is completed.\hfill$\Box$
\end{proof}

If $p$ is left invertible and $q$ is right invertible, then $m\mathcal{L}pm$ and $m\mathcal{R}mq$. As a special result of Theorem \ref{mary inverse}, we have the following corollary.

\begin{corollary} Let $p,a,q,m\in R$ with $m$ regular. If $p$ is left invertible and $q$ is right invertible, then the following conditions are equivalent

\emph{(i)} $a$ is invertible along $pmq$.

\emph{(ii)} $u=mqap +1- mm^{(1)}$ is invertible.

\emph{(iii)} $v=qapm+1-m^{(1)}m$ is invertible.\\

In this case, $$a^{\parallel pmq}=pu^{-1}mq=pmv^{-1}q.$$
\end{corollary}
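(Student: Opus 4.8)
The plan is to derive the Corollary as an immediate specialization of Theorem~\ref{mary inverse}, so the only real work is to verify that the hypotheses of the theorem are satisfied under the stronger assumptions that $p$ is left invertible and $q$ is right invertible. First I would observe that if $p$ has a left inverse $p'$ with $p'p=1$, then $p'(pm)=m$, which exhibits $m\leq_\mathcal{L}pm$; dually, if $q$ has a right inverse $q'$ with $qq'=1$, then $(mq)q'=m$, giving $m\leq_\mathcal{R}mq$. (In fact one gets the full Green relations $m\,\mathcal{L}\,pm$ and $m\,\mathcal{R}\,mq$, since trivially $pm\leq_\mathcal{L}m$ and $mq\leq_\mathcal{R}m$ as $pm=p\cdot m$ and $mq=m\cdot q$ with $p,q\in R=R^1$; but only the two preorder inequalities are needed to invoke the theorem.)

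Once those two conditions are in place, Theorem~\ref{mary inverse} applies verbatim with the same $m$ and the same choice of inner inverse $m^{(1)}$, yielding the equivalence of (i), (ii), (iii) and the formula $a^{\parallel pmq}=pu^{-1}mq=pmv^{-1}q$ with $u=mqap+1-mm^{(1)}$ and $v=qapm+1-m^{(1)}m$. So the proof is essentially a one-line reduction: check the hypotheses, then cite the theorem.

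I do not anticipate any genuine obstacle here, since the argument is purely a matter of recognizing that ``left/right invertible'' is a special case of the one-sided divisibility conditions $m\leq_\mathcal{L}pm$ and $m\leq_\mathcal{R}mq$. The only point requiring a moment's care is the direction of the Green preorders: one must be sure that left invertibility of $p$ gives $m\leq_\mathcal{L}pm$ (not the reverse), i.e. that $m$ lies in the left ideal generated by $pm$, which is exactly what $m=p'(pm)$ says. With that checked, the statement follows directly.

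\begin{proof}
Since $p$ is left invertible, there exists $p'\in R$ with $p'p=1$; hence $m=p'pm\in R^{1}(pm)$, which gives $m\leq_\mathcal{L}pm$. Since $q$ is right invertible, there exists $q'\in R$ with $qq'=1$; hence $m=mqq'\in (mq)R^{1}$, which gives $m\leq_\mathcal{R}mq$. Thus the hypotheses of Theorem~\ref{mary inverse} are fulfilled, and the equivalence of \emph{(i)}, \emph{(ii)}, \emph{(iii)} together with the formula $a^{\parallel pmq}=pu^{-1}mq=pmv^{-1}q$ follow at once.\hfill$\Box$
\end{proof}
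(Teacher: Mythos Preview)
Your proof is correct and follows essentially the same approach as the paper: the paper simply observes that left invertibility of $p$ and right invertibility of $q$ yield $m\,\mathcal{L}\,pm$ and $m\,\mathcal{R}\,mq$, and then invokes Theorem~\ref{mary inverse}. Your argument makes this explicit by writing $m=p'(pm)$ and $m=(mq)q'$, which is exactly the intended reduction.
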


Taking $p=q=1$, we get

\begin{corollary} \label{m and p theorem}{\rm (\cite[Theorem 3.2]{Mary and Patricio} and \cite[Theorem 1.3]{Mary and Patricio 2})} Let $m$ be a regular element of a ring $R$. Then the following are equivalent

\emph{(i)} $a$ is invertible along $m$.

\emph{(ii)} $u=ma+1-mm^{(1)}$ is invertible.

\emph{(iii)} $v=am+1-m^{(1)}m$ is invertible.\\

In this case, $$a^{\parallel m}=u^{-1}m=mv^{-1}.$$
\end{corollary}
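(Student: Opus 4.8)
The plan is to obtain this statement as the immediate specialization of Theorem \ref{mary inverse} to the case $p=q=1$, so the work reduces to checking that the hypotheses of that theorem are automatically satisfied here and then performing the substitution. First I would verify the two order conditions: with $p=q=1$ we have $pm=m$ and $mq=m$, hence $m=1\cdot pm$ gives $m\leq_\mathcal{L}pm$ and $m=mq\cdot 1$ gives $m\leq_\mathcal{R}mq$, both trivially (one may take $p'=q'=1$ in the notation of the proof of Theorem \ref{mary inverse}). Since $m$ is assumed regular, all hypotheses of Theorem \ref{mary inverse} are in force.

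Next I would substitute $p=q=1$ into the conclusion of Theorem \ref{mary inverse}. The product $pmq$ becomes $m$, so condition (i) reads ``$a$ is invertible along $m$''; the element $u=mqap+1-mm^{(1)}$ becomes $u=ma+1-mm^{(1)}$ and $v=qapm+1-m^{(1)}m$ becomes $v=am+1-m^{(1)}m$, giving conditions (ii) and (iii); and the formula $a^{\parallel pmq}=pu^{-1}mq=pmv^{-1}q$ collapses to $a^{\parallel m}=u^{-1}m=mv^{-1}$. This is precisely the asserted chain of equivalences together with the expression for $a^{\parallel m}$.

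Because the whole argument is a substitution, there is essentially no obstacle; the only point needing a moment's attention is confirming that the $\mathcal{L}$- and $\mathcal{R}$-order conditions hold vacuously when $p=q=1$, which they do. As an alternative one could re-run the proof of Theorem \ref{mary inverse} directly in this setting, using the criterion $m\,\mathcal{H}\,mam$ from \cite[Theorem 2.2]{Mary and Patricio} together with Lemmas \ref{eRe} and \ref{Jlemma}, but that would only reproduce the specialized computation, so deducing it as a corollary is the cleaner route.
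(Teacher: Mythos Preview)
Your proposal is correct and matches the paper's own approach: the paper derives this corollary simply by ``Taking $p=q=1$'' in Theorem~\ref{mary inverse}, exactly the substitution you carry out, with the trivial verification of the $\mathcal{L}$- and $\mathcal{R}$-order hypotheses.
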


\section{Applications to the inverse along a matrix}

Mary, Patr\'{i}cio \cite{Mary and Patricio 2} gave some equivalent conditions for the existence of the inverse along a regular lower triangular matrix $\begin{bmatrix}
              d_1 & 0 \\
              d_2 & d_4
        \end{bmatrix}$ over a Dedekind-finite ring. \normalsize It would be interesting to find the related existence criteria and formula of the inverse along a regular matrix
$D=\begin{bmatrix}
              d_1 & d_3 \\
              d_2 & d_4
        \end{bmatrix}$, in the general case.

By $R_{2 \times 2}$ we denote the ring of $2 \times 2$ matrices over $R$. Let $D=
\begin{bmatrix}
              d_1 & d_3 \\
              d_2 & 0
        \end{bmatrix}\in R_{2 \times 2}$ and $D=
\begin{bmatrix}
              0 & 1 \\
              1 & 0
\end{bmatrix}
 \begin{bmatrix}
              d_2 & 0\\
              d_1 & d_3
        \end{bmatrix}
        \begin{bmatrix}
              1 & 0 \\
              0 & 1
        \end{bmatrix}=:PMQ.$ Given a lower triangular matrix $M=\begin{bmatrix}
              d_2 & 0\\
              d_1 & d_3
        \end{bmatrix}$ with $d_2$ and $d_3$ regular, Patr\'{i}cio and Puystjens \cite{Patricio and Puystjens} proved that $M$
 is regular if and only if $w=(1-d_3d_3^{+})d_1(1-d_2^{+}d_2)$ is regular. In this case, $$MM^{-}=\begin{bmatrix}
              d_2d_2^{+} ~&~ 0 \\
             (1-ww^{-})(1-d_3d_3^{+})d_1d_2^{+}  ~&~ d_3d_3^{+}+ww^{-}(1-d_3d_3^{+})
        \end{bmatrix}.$$

 First, we consider the inverse along a regular $(2,2,0)$ matrix in a ring.

 \begin{theorem}  Let $A=\begin{bmatrix}
              a & c \\
              b & d
        \end{bmatrix}$, $D=\begin{bmatrix}
              d_1 & d_3 \\
              d_2 & 0
        \end{bmatrix}\in R_{2 \times 2}$ with $d_3$ and $d_3$ regular.
 If $c^{\parallel d_2}$ exists,
 then $A^{\parallel D}$ exists if and only if $\xi=\beta-\alpha c^{\parallel{d_2}}a$ is invertible.

 In this case, $A^{\parallel D} =
\begin{bmatrix}
              \xi^{-1}(d_1-\alpha c^{\parallel{d_2}})&~~ \xi^{-1}d_3 \\
             c^{\parallel{d_2}}[1-a\xi^{-1}(d_1-\alpha c^{\parallel{d_2}})]  &~~ -c^{\parallel{d_2}}a\xi^{-1}d_3
        \end{bmatrix}$, where
 \begin{eqnarray*}
 \alpha & = & d_1c+d_3d -(1-ww^{-})(1-d_3d_3^{+})d_1d_2^{+},\\
 \beta & = & d_1a+d_3b+(1-ww^{-})(1-d_3d_3^{+}),\\
 \xi   & = & \beta-\alpha c^{\parallel{d_2}}a.
 \end{eqnarray*}
 \end{theorem}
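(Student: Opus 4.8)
The plan is to reduce everything to Theorem~\ref{mary inverse} applied to the factorization $D = PMQ$ with
$$P = \begin{bmatrix} 0 & 1 \\ 1 & 0 \end{bmatrix}, \qquad M = \begin{bmatrix} d_2 & 0 \\ d_1 & d_3 \end{bmatrix}, \qquad Q = \begin{bmatrix} 1 & 0 \\ 0 & 1 \end{bmatrix},$$
exactly as set up in the paragraph preceding the statement. Since $P$ is a unit (it is its own inverse) and $Q = I$, the corollary after Theorem~\ref{mary inverse} applies verbatim: $A^{\parallel D}$ exists if and only if $U = MQAP + I - MM^{(1)}$ is invertible in $R_{2\times 2}$, and then $A^{\parallel D} = P U^{-1} M Q = P U^{-1} M$. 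So the first step is to record an explicit inner inverse $M^{(1)}$ of $M$; this is where the hypothesis that $d_2$ and $d_3$ are regular enters, via the Patr\'icio--Puystjens formula for $MM^{-}$ quoted just above the theorem. (I note the statement says ``$d_3$ and $d_3$ regular'' — this is a typo for ``$d_2$ and $d_3$ regular'', which is what makes $w = (1-d_3d_3^{+})d_1(1-d_2^{+}d_2)$ meaningful and what the quoted lemma requires.)

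The second step is to compute $U = MAP + I - MM^{-}$ as an explicit $2\times2$ matrix. Writing $E := MM^{-}$ from the quoted formula, one gets $I - E = \begin{bmatrix} 1 - d_2d_2^{+} & 0 \\ -(1-ww^{-})(1-d_3d_3^{+})d_1d_2^{+} & 1 - d_3d_3^{+} - ww^{-}(1-d_3d_3^{+})\end{bmatrix}$, and $MAP = \begin{bmatrix} d_2 & 0 \\ d_1 & d_3\end{bmatrix}\begin{bmatrix} a & c \\ b & d\end{bmatrix}\begin{bmatrix} 0 & 1 \\ 1 & 0\end{bmatrix}$. Carrying out these products and adding, one should find that the $(1,1)$ entry of $U$ is exactly $d_2 c + 1 - d_2 d_2^{+}$ (after simplification), the $(1,2)$ entry is $d_2 a$, and the bottom row assembles into the expressions $\alpha$ (bottom-left, combining $d_1 c + d_3 d$ with the $(2,1)$ part of $I-E$) and $\beta$ (bottom-right, combining $d_1 a + d_3 b$ with the $(2,2)$ part of $I - E$). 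Thus I expect
$$U = \begin{bmatrix} d_2 c + 1 - d_2 d_2^{+} & d_2 a \\ \alpha & \beta \end{bmatrix}.$$

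The third step is to invert this block matrix. The key observation is that $c^{\parallel d_2}$ existing is, by Corollary~\ref{m and p theorem}, precisely the invertibility of the $(1,1)$ block $d_2 c + 1 - d_2 d_2^{(1)}$ (for the appropriate choice of $d_2^{(1)} = d_2^{+}$), with inverse-related identity $c^{\parallel d_2} = (d_2 c + 1 - d_2 d_2^{+})^{-1} d_2$. So $U$ is a block matrix with invertible $(1,1)$ block, and it is invertible if and only if its Schur complement $\beta - \alpha (d_2 c + 1 - d_2 d_2^{+})^{-1} d_2 a = \beta - \alpha c^{\parallel d_2} a = \xi$ is invertible — here I use $c^{\parallel d_2} a$ in place of $(d_2c+1-d_2d_2^+)^{-1} d_2 a$, which is legitimate because $(d_2 c + 1 - d_2 d_2^+)^{-1} d_2 = c^{\parallel d_2}$. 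This gives the stated equivalence. Then I would apply the standard block-inverse formula (upper-triangular $\times$ diagonal $\times$ lower-triangular decomposition) to write $U^{-1}$ entrywise in terms of $(d_2c+1-d_2d_2^+)^{-1}$, $\xi^{-1}$, $\alpha$, $d_2 a$, and use $c^{\parallel d_2}$ to absorb the $(d_2 c + 1 - d_2 d_2^+)^{-1} d_2$ factors; finally $A^{\parallel D} = P U^{-1} M$, and multiplying out $P U^{-1} M$ (which swaps the two rows of $U^{-1} M$) should produce exactly the claimed matrix, with the $d_1 - \alpha c^{\parallel d_2}$ and $1 - a\xi^{-1}(d_1 - \alpha c^{\parallel d_2})$ patterns emerging from the products with $M$'s columns.

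The main obstacle I anticipate is purely computational bookkeeping: verifying that the raw products $MAP$ and $I - MM^-$ collapse to the clean forms involving $\alpha$ and $\beta$ requires using the regularity identities $d_2 d_2^+ d_2 = d_2$, $d_3 d_3^+ d_3 = d_3$, $w w^- w = w$ repeatedly, and one must be careful that cross-terms like $d_1 d_2^+ \cdot d_2 a$ versus $d_1 a$ differ by $d_1(1 - d_2^+ d_2)a$, which is why the $w$-terms appear and ultimately why the hypothesis ``$c^{\parallel d_2}$ exists'' (rather than merely ``$c^{\parallel d_2}$ and something about $w$'') suffices. A secondary subtlety is confirming that the inner inverse $M^{(1)}$ used to form $U$ may be taken to be the Patr\'icio--Puystjens one and that the final answer is independent of that choice, which follows from the uniqueness of $a^{\parallel D}$ in Theorem~\ref{mary inverse}.
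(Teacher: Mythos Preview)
Your proposal is correct and follows essentially the same route as the paper: apply Theorem~\ref{mary inverse} (via its corollary for invertible $P$, $Q$) to the factorization $D=PMQ$ with $P$ the swap matrix, compute $U=MAP+I-MM^{-}$ using the Patr\'{i}cio--Puystjens form of $MM^{-}$, observe that the $(1,1)$ block $d_2c+1-d_2d_2^{+}$ is invertible precisely because $c^{\parallel d_2}$ exists, take the Schur complement to get $\xi$, and finish with $A^{\parallel D}=PU^{-1}M$. The only remark is that your anticipated ``main obstacle'' is not really there: the entries of $MAP$ and $I-MM^{-}$ simply add term-by-term to give $u,\,d_2a,\,\alpha,\,\beta$ with no further simplification via regularity identities required.
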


\begin{proof} We have $MAP=
\begin{bmatrix}
              d_2c & d_2a \\
              d_1c+d_3d & d_1a+d_3b
        \end{bmatrix}$. Hence,
 \begin{center}
 $U=MAP+I-MM^{-}=\begin{bmatrix}
              u & d_2a \\
             \alpha  & \beta
        \end{bmatrix}$, where
 \end{center}

 \begin{eqnarray*}
 u & = & d_2c+1-d_2d_2^{+},\\
 \alpha & = & d_1c+d_3d -(1-ww^{-})(1-d_3d_3^{+})d_1d_2^{+},\\
 \beta & = & d_1a+d_3b+(1-ww^{-})(1-d_3d_3^{+}).
 \end{eqnarray*}

Since $c^{\parallel d_2}$ exists, it follows that $u=d_2c+1-d_2d_2^{+}$ is invertible and $c^{\parallel d_2}=u^{-1}d_2$. Using Schur complements we get the factorization
$$U=\begin{bmatrix}
              1 & 0 \\
             \alpha u^{-1} & 1
        \end{bmatrix}
 \begin{bmatrix}
              u & 0 \\
             0  & \xi
        \end{bmatrix}
 \begin{bmatrix}
              1 & c^{\parallel{d_2}} a \\
             0  & 1
        \end{bmatrix},$$ where $\xi=\beta-\alpha c^{\parallel{d_2}} a$.
 Hence, $U$ is invertible if and only if $\xi$ is invertible.

Note that $U^{-1}= \begin{bmatrix}
              1 & -c^{\parallel{d_2}}a \\
             0  & 1
        \end{bmatrix}
 \begin{bmatrix}
              u^{-1} & 0 \\
             0  & \xi^{-1}
        \end{bmatrix}
 \begin{bmatrix}
              1 & 0 \\
             -\alpha u^{-1} & 1
        \end{bmatrix}
.$
Then $$A^{\parallel D}=PU^{-1}M =
\begin{bmatrix}
              \xi^{-1}(d_1-\alpha c^{\parallel{d_2}})&~~ \xi^{-1}d_3 \\
             c^{\parallel{d_2}}[1-a\xi^{-1}(d_1-\alpha c^{\parallel{d_2}})]  &~~ -c^{\parallel{d_2}}a\xi^{-1}d_3
        \end{bmatrix}.$$

 The proof is completed. \hfill$\Box$
\end{proof}

Now, suppose that $d_4$ in the matrix $D$ is regular and set $e=1-d_4d_4^{+}$, $f=1-d_4^{+}d_4$ and $s=d_1-d_3d_4^{+}d_2$. We have the following decomposition
\begin{center}
$D=\begin{bmatrix}
              d_1 & d_3 \\
              d_2 & d_4
        \end{bmatrix} = \begin{bmatrix}
              1 & d_3d_4^{+}\\
              0 & 1
        \end{bmatrix}
\begin{bmatrix}
               s  & &d_3f\\
              ed_2 & &d_4
        \end{bmatrix}
\begin{bmatrix}
               1      & 0\\
              d_4^{+}d_2 & 1
        \end{bmatrix}=:PMQ.$
\end{center}

We next discuss the inverse of $A=\begin{bmatrix}
              a & c \\
              b & d
        \end{bmatrix}$ along a regular matrix $D$, under certain conditions.

\begin{theorem} \label{Along a Matrix}

Let $A=\begin{bmatrix}
              a & c \\
              b & d
        \end{bmatrix}$, $D=\begin{bmatrix}
              d_1 & d_3 \\
              d_2 & d_4
       \end{bmatrix}\in R_{2 \times 2}$ with $d_4$ regular.
 With the notations above, if $d_3f=0$ and $a^{\parallel s}$ exists, then $A^{\parallel D}$ exists
 if and only if $\xi=\beta-\alpha a^{\parallel s}(ad_3d_4^{+}+c)$ is invertible.

 In this case, $A^{\parallel D}=
\begin{bmatrix}
x_1d_2+ x_2s &~~ x_1d_4 \\
\xi^{-1}(d_2-\alpha a^{\parallel s}) &~~ \xi^{-1}d_4
        \end{bmatrix}$, where
 \begin{eqnarray*}
 u   & = & sa+1-ss^{+},\\
 t  & = & ed_2(1-s^{+}s),\\
 \alpha  & = &  d_2a+d_4b-(1-tt^{-})ed_2s^{+}, \\
 \beta & = & (d_2a+d_4b)d_3d_4^{+}+d_2c+d_4d+(1-tt^{-})e,\\
 \xi & = & \beta-\alpha a^{\parallel s}(ad_3d_4^{+}+c),\\
 x_1 & = &[(1 -a^{\parallel s}a)d_3d_4^{+}-a^{\parallel s}c]\xi^{-1},\\
 x_2  & = &  u^{-1}-x_1 \alpha u^{-1}.
\end{eqnarray*}
\end{theorem}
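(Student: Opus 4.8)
The plan is to follow the same strategy as in the proof of the $(2,2,0)$ case, specializing Theorem~\ref{mary inverse} to the decomposition $D=PMQ$ exhibited just above the statement. First I would check that the hypotheses of Theorem~\ref{mary inverse} are met: the matrices $P=\bigl[\begin{smallmatrix}1 & d_3d_4^{+}\\ 0 & 1\end{smallmatrix}\bigr]$ and $Q=\bigl[\begin{smallmatrix}1 & 0\\ d_4^{+}d_2 & 1\end{smallmatrix}\bigr]$ are units in $R_{2\times2}$, so certainly $M\,\mathcal{L}\,PM$ and $M\,\mathcal{R}\,MQ$; thus $A^{\parallel D}$ exists iff $U:=MAP+I-MM^{-}$ is invertible, and then $A^{\parallel D}=PU^{-1}M$. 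So the whole proof reduces to two computations: (a) compute $MAP$ and $I-MM^{-}$ explicitly and identify the entries of $U$; (b) show $U$ is invertible iff $\xi$ is invertible, via a Schur-complement factorization, and read off $U^{-1}$, hence $PU^{-1}M$.

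For step (a) I would use the hypothesis $d_3f=0$, which forces $M=\bigl[\begin{smallmatrix}s & 0\\ ed_2 & d_4\end{smallmatrix}\bigr]$ to be lower triangular. Then the Patr\'{i}cio--Puystjens formula for $MM^{-}$ quoted in the excerpt applies with the roles $d_2\mapsto s$, $d_1\mapsto ed_2$, $d_3\mapsto d_4$ (note $es=e(d_1-d_3d_4^{+}d_2)$ and, since $ed_4=0$ and $d_3f=0$, the relevant ``$w$'' collapses to $t=ed_2(1-s^{+}s)$, with the extra factor $(1-d_4d_4^{+})=e$ already absorbed because $e\cdot ed_2=ed_2$). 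This gives $MM^{-}=\bigl[\begin{smallmatrix}ss^{+} & 0\\ (1-tt^{-})ed_2s^{+} & d_4d_4^{+}+tt^{-}e\end{smallmatrix}\bigr]$, so $I-MM^{-}$ has $(1,1)$-entry $1-ss^{+}$, $(2,1)$-entry $-(1-tt^{-})ed_2s^{+}$, and $(2,2)$-entry $e-tt^{-}e=(1-tt^{-})e$. Next I multiply out $MAP$: with $M$ lower triangular and $AP=\bigl[\begin{smallmatrix}a & ad_3d_4^{+}+c\\ b & bd_3d_4^{+}+d\end{smallmatrix}\bigr]$, one gets $MAP=\bigl[\begin{smallmatrix}sa & s(ad_3d_4^{+}+c)\\ ed_2a+d_4b & ed_2(ad_3d_4^{+}+c)+d_4(bd_3d_4^{+}+d)\end{smallmatrix}\bigr]$. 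Adding, the $(1,1)$-entry of $U$ is $u=sa+1-ss^{+}$ (consistent with $a^{\parallel s}$ existing, by Corollary~\ref{m and p theorem}, and $a^{\parallel s}=u^{-1}s$), the $(2,1)$-entry is $\alpha$, the $(2,2)$-entry is $\beta$, and the $(1,2)$-entry is $s(ad_3d_4^{+}+c)=u\,a^{\parallel s}(ad_3d_4^{+}+c)$ after using $ss^{+}s=s$ (or rather $u^{-1}s = a^{\parallel s}$ reversed). Here I should be a little careful: the clean factorization wants the $(1,2)$-entry in the form $u\cdot\gamma$ with $\gamma=a^{\parallel s}(ad_3d_4^{+}+c)$; this holds because $u^{-1}\cdot s(ad_3d_4^{+}+c) = a^{\parallel s}(ad_3d_4^{+}+c)\cdot$— more precisely one uses that $u^{-1}s(ad_3d_4^+ + c)$ equals $\gamma$ only if $s(ad_3d_4^++c)=u\gamma$, i.e. one verifies directly $u\,a^{\parallel s} = u u^{-1}s = s$, so $u\gamma = s(ad_3d_4^{+}+c)$ indeed. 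With these identifications,
$$U=\begin{bmatrix}u & u\gamma\\ \alpha & \beta\end{bmatrix},\qquad \gamma=a^{\parallel s}(ad_3d_4^{+}+c).$$

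For step (b), I factor $U$ by Schur complement about the invertible $(1,1)$-block $u$:
$$U=\begin{bmatrix}1 & 0\\ \alpha u^{-1} & 1\end{bmatrix}\begin{bmatrix}u & 0\\ 0 & \xi\end{bmatrix}\begin{bmatrix}1 & \gamma\\ 0 & 1\end{bmatrix},\qquad \xi=\beta-\alpha u^{-1}u\gamma=\beta-\alpha\gamma,$$
exactly as in the previous theorem; here $\xi=\beta-\alpha a^{\parallel s}(ad_3d_4^{+}+c)$ matches the statement. The outer factors are units, so $U$ is invertible iff $\xi$ is invertible, proving the equivalence. Inverting the factorization gives
$$U^{-1}=\begin{bmatrix}1 & -\gamma\\ 0 & 1\end{bmatrix}\begin{bmatrix}u^{-1} & 0\\ 0 & \xi^{-1}\end{bmatrix}\begin{bmatrix}1 & 0\\ -\alpha u^{-1} & 1\end{bmatrix}=\begin{bmatrix}u^{-1}+\gamma\alpha u^{-1}\cdot(-1)\cdot(-1) & \cdots\\ \cdots & \cdots\end{bmatrix},$$
and then $A^{\parallel D}=PU^{-1}M$. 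The last task is to multiply $P\,U^{-1}\,M$ out and match it against the claimed matrix. Writing $U^{-1}=\bigl[\begin{smallmatrix}y_1 & y_2\\ y_3 & y_4\end{smallmatrix}\bigr]$ with $y_1=u^{-1}+\gamma\xi^{-1}\alpha u^{-1}$, $y_2=-\gamma\xi^{-1}$, $y_3=-\xi^{-1}\alpha u^{-1}$, $y_4=\xi^{-1}$, and using $M$'s entries $M_{11}=s$, $M_{21}=ed_2$, $M_{22}=d_4$, $M_{12}=0$, together with $P_{11}=1$, $P_{12}=d_3d_4^{+}$, $P_{21}=0$, $P_{22}=1$, a direct computation should yield the stated $x_1,x_2$ and the four entries $x_1d_2+x_2s$, $x_1d_4$, $\xi^{-1}(d_2-\alpha a^{\parallel s})$, $\xi^{-1}d_4$. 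I expect the main obstacle to be purely bookkeeping: matching the bottom row requires simplifying $-\xi^{-1}\alpha u^{-1}\cdot s = -\xi^{-1}\alpha a^{\parallel s}$ and $(-\xi^{-1}\alpha u^{-1})(ed_2)\cdot(\text{via }P) + \xi^{-1}d_4 \cdot d_4^+d_2$-type terms down to $\xi^{-1}(d_2-\alpha a^{\parallel s})$, which uses $d_4d_4^{+}d_2$ and $ed_2=(1-d_4d_4^{+})d_2$ to telescope; and the top row needs $y_1 s + y_1 (d_3d_4^+)(ed_2)$ etc., plus the definitions of $x_1,x_2$, to collapse correctly. Verifying that $x_1$ really equals $[(1-a^{\parallel s}a)d_3d_4^{+}-a^{\parallel s}c]\xi^{-1}$ — i.e. rewriting $(d_3d_4^{+} - a^{\parallel s}(ad_3d_4^{+}+c))\xi^{-1}$ — is the one spot where a sign or an $s^{+}s$ factor could slip, so that is where I would be most careful; everything else is an exercise in $2\times2$ block multiplication.
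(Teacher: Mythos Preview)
Your overall strategy---apply Theorem~\ref{mary inverse} to the factorization $D=PMQ$, compute $U$, do a Schur-complement factorization about the invertible $(1,1)$-block, and then multiply out---is exactly the paper's approach. But you have dropped the factor $Q$ in the key step. Theorem~\ref{mary inverse} says $A^{\parallel PMQ}$ exists iff $U=MQAP+I-MM^{-}$ is invertible, with $A^{\parallel PMQ}=PU^{-1}MQ$; you wrote $U=MAP+I-MM^{-}$ and $A^{\parallel D}=PU^{-1}M$. In the earlier $(2,2,0)$ theorem this did not matter because there $Q=I$, but here $Q=\bigl[\begin{smallmatrix}1&0\\ d_4^{+}d_2&1\end{smallmatrix}\bigr]\neq I$.

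Concretely, with your $MAP$ the $(2,1)$-entry is $ed_2a+d_4b$ and the $(2,2)$-entry is $ed_2(ad_3d_4^{+}+c)+d_4(bd_3d_4^{+}+d)$, so after adding $I-MM^{-}$ you do \emph{not} get $\alpha$ and $\beta$ as stated (you get $ed_2a$ in place of $d_2a$, etc.). The missing $Q$ is precisely what supplies the extra $d_4d_4^{+}d_2$ terms that make $ed_2a+d_4d_4^{+}d_2a=d_2a$ collapse; indeed $MQ=\bigl[\begin{smallmatrix}s&0\\ d_2&d_4\end{smallmatrix}\bigr]$, and $MQAP$ has bottom row $\bigl[\,d_2a+d_4b\;\;\; (d_2a+d_4b)d_3d_4^{+}+d_2c+d_4d\,\bigr]$, matching $\alpha,\beta$. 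Likewise the final multiplication must be $PU^{-1}MQ$, not $PU^{-1}M$; this is why the ``$d_4^{+}d_2$-type terms'' you anticipated appear at all. Once you insert the $Q$ in both places, your Schur factorization and the remaining bookkeeping go through exactly as you outlined.
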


\begin{proof} If $d_3f=0$, then $M=
\begin{bmatrix}
               s  & &0\\
              ed_2 & &d_4
  \end{bmatrix}$. Note that the regularity of $D$ is equivalent to the regularity of $M$. Hence, it follows from \cite[Theorem 1]{Patricio and Puystjens} that $$I-MM^{-}=\begin{bmatrix}
               1-ss^{+}  & &~~~0\\
              -(1-tt^{-})ed_2s^{+} & &~~~(1-tt^{-})e
        \end{bmatrix},$$ where $t=ed_2(1-s^{+}s)$.

Note that $MQAP=
\begin{bmatrix}
              sa &~ s(ad_3d_4^{+}+c) \\
              d_2a+d_4b &~ (d_2a+d_4b)d_3d_4^{+}+d_2c+d_4d
        \end{bmatrix}.$ We have

 $$U=MQAP+I-MM^{-}=
\begin{bmatrix}
              u &~ s(ad_3d_4^{+}+c) \\
              \alpha &~ \beta
        \end{bmatrix},$$
 where
 \begin{eqnarray*}
 u   & = & sa+1-ss^{+},\\
 \alpha  & = &  d_2a+d_4b-(1-tt^{-})ed_2s^{+}, \\
\beta & = & (d_2a+d_4b)d_3d_4^{+}+d_2c+d_4d+(1-tt^{-})e.
 \end{eqnarray*}
In this case,
$$U^{-1}=\begin{bmatrix}
              1 &~ -a^{\parallel s}(ad_3d_4^{+}+c) \\
              0 &~1
        \end{bmatrix}
\begin{bmatrix}
              u^{-1} & 0 \\
              0 & \xi^{-1}
        \end{bmatrix}
        \begin{bmatrix}
              1 & 0 \\
              -\alpha u^{-1} & 1
        \end{bmatrix},$$ where $\xi=\beta-\alpha a^{\parallel s}(ad_3d_4^{+}+c)$.

 By calculations, $A^{\parallel D}=PU^{-1}MQ=\begin{bmatrix}
              x_1d_2+ x_2s &~~~ x_1d_4 \\
              \xi^{-1}(d_2-\alpha a^{\parallel s}) &~~~ \xi^{-1}d_4
        \end{bmatrix}$, where
 \begin{eqnarray*}
 x_1 & = &[(1 -a^{\parallel s}a)d_3d_4^{+}-a^{\parallel s}c]\xi^{-1},\\
 x_2  & = &  u^{-1}-x_1 \alpha u^{-1}.
\end{eqnarray*}

The proof is completed. \hfill$\Box$
\end{proof}

If $d_4$ is invertible, then $e=f=0$. Hence, we have the following corollary.

\begin{corollary} Let $A=\begin{bmatrix}
              a & c \\
              b & d
       \end{bmatrix}$, $D=\begin{bmatrix}
              d_1 & d_3 \\
              d_2 & d_4
       \end{bmatrix}\in R_{2 \times 2}$ with $d_4$ invertible.
 If $a^{\parallel s}$ exists, then $A^{\parallel D}$ exists
 if and only if $\xi=\beta-\alpha a^{\parallel s}(ad_3d_4^{-1}+c)$ is invertible.

 In this case, $A^{\parallel D}=\begin{bmatrix}
               x_1d_2+ x_2s &~~~ x_1d_4 \\
              \xi^{-1}(d_2-\alpha a^{\parallel s}) &~~~ \xi^{-1}d_4
        \end{bmatrix}$, where
 \begin{eqnarray*}
 s   & = & d_1-d_3d_4^{-1}d_2,\\
 u   & = & sa+1-ss^{+},\\
 \alpha  & = &  d_2a+d_4b, \\
\beta & = & \alpha d_3d_4^{-1}+d_2c+d_4d,\\
 \xi & = & \beta-\alpha a^{\parallel s}(ad_3d_4^{-1}+c),\\
 x_1 & = &[(1 -a^{\parallel s}a)d_3d_4^{-1}-a^{\parallel s}c]\xi^{-1},\\
 x_2  & = &  u^{-1}-x_1 \alpha u^{-1}.
\end{eqnarray*}
\end{corollary}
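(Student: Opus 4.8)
The plan is to deduce everything from Theorem~\ref{mary inverse} applied to the factorization $D=PMQ$ exhibited above. First I would note that $P=\begin{bmatrix}1 & d_3d_4^{+}\\ 0 & 1\end{bmatrix}$ and $Q=\begin{bmatrix}1 & 0\\ d_4^{+}d_2 & 1\end{bmatrix}$ are units of $R_{2\times 2}$ (with inverses obtained by negating the off-diagonal entry), so $P$ is left invertible and $Q$ is right invertible; hence $M\leq_\mathcal{L}PM$ and $M\leq_\mathcal{R}MQ$, which is exactly what Theorem~\ref{mary inverse} requires. Once we know that $D$, hence $M$, is regular, that theorem tells us $A^{\parallel D}$ exists if and only if $U=MQAP+I-MM^{(1)}$ is invertible, in which case $A^{\parallel D}=PU^{-1}MQ$; since the criterion is independent of the inner inverse of $M$ chosen, I am free to pick a convenient one.

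Next I would use $d_3f=0$, which collapses $M$ to the lower triangular matrix $\begin{bmatrix}s & 0\\ ed_2 & d_4\end{bmatrix}$. As $D$ and $M$ differ only by the units $P,Q$ they have the same regularity, and since the diagonal entries $s$ (regular because $a^{\parallel s}$ exists) and $d_4$ (regular by hypothesis) are regular, \cite[Theorem~1]{Patricio and Puystjens} applies to $M$ and supplies a concrete inner inverse $M^{-}$ with $I-MM^{-}=\begin{bmatrix}1-ss^{+} & 0\\ -(1-tt^{-})ed_2s^{+} & (1-tt^{-})e\end{bmatrix}$, the relevant term being $t=(1-d_4d_4^{+})ed_2(1-s^{+}s)=ed_2(1-s^{+}s)$ (using $e^{2}=e$). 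Then I would multiply out $MQAP$ — the only delicate point being that its second row simplifies through repeated use of $e+d_4d_4^{+}=1$ — and add $I-MM^{-}$, obtaining $U=\begin{bmatrix}u & s(ad_3d_4^{+}+c)\\ \alpha & \beta\end{bmatrix}$ with $u,\alpha,\beta$ as in the statement.

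Since $a^{\parallel s}$ exists, Corollary~\ref{m and p theorem} gives that $u=sa+1-ss^{+}$ is invertible and $a^{\parallel s}=u^{-1}s$. With $u$ a unit I can Schur-complement $U$ along the $(1,1)$ entry: $U=\begin{bmatrix}1 & 0\\ \alpha u^{-1} & 1\end{bmatrix}\begin{bmatrix}u & 0\\ 0 & \xi\end{bmatrix}\begin{bmatrix}1 & a^{\parallel s}(ad_3d_4^{+}+c)\\ 0 & 1\end{bmatrix}$, where the Schur complement equals $\beta-\alpha u^{-1}s(ad_3d_4^{+}+c)=\beta-\alpha a^{\parallel s}(ad_3d_4^{+}+c)=\xi$. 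The outer factors are unitriangular, hence units, so $U$ is invertible if and only if $\xi$ is; together with Theorem~\ref{mary inverse} this is the asserted existence criterion. For the formula, assuming $\xi$ invertible I would invert the three-term factorization, $U^{-1}=\begin{bmatrix}1 & -a^{\parallel s}(ad_3d_4^{+}+c)\\ 0 & 1\end{bmatrix}\begin{bmatrix}u^{-1} & 0\\ 0 & \xi^{-1}\end{bmatrix}\begin{bmatrix}1 & 0\\ -\alpha u^{-1} & 1\end{bmatrix}$, compute $MQ=\begin{bmatrix}s & 0\\ d_2 & d_4\end{bmatrix}$ (again $e+d_4d_4^{+}=1$), and form $A^{\parallel D}=PU^{-1}MQ$. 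Computing $PU^{-1}$ first and using $u^{-1}s=a^{\parallel s}$, its first row becomes $\begin{bmatrix}x_2 & x_1\end{bmatrix}$ with $x_1=(d_3d_4^{+}-a^{\parallel s}(ad_3d_4^{+}+c))\xi^{-1}=[(1-a^{\parallel s}a)d_3d_4^{+}-a^{\parallel s}c]\xi^{-1}$ and $x_2=u^{-1}-x_1\alpha u^{-1}$, and its second row is $\begin{bmatrix}-\xi^{-1}\alpha u^{-1} & \xi^{-1}\end{bmatrix}$; multiplying by $MQ$ on the right (again using $u^{-1}s=a^{\parallel s}$ to turn the $(2,1)$ entry into $\xi^{-1}(d_2-\alpha a^{\parallel s})$) gives the displayed expression for $A^{\parallel D}$.

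I expect the main obstacle to be purely computational: keeping the two large matrix products $MQAP$ and $PU^{-1}MQ$ organized so that all off-diagonal cancellations actually occur, all of which rest on the two elementary identities $e+d_4d_4^{+}=1$ and $u^{-1}s=a^{\parallel s}$. Nothing conceptual remains once Theorem~\ref{mary inverse} is available; the one subtlety worth stating explicitly in the write-up is that the existence criterion does not depend on which inner inverse of $M$ (equivalently, of $D$) is used, which is precisely what legitimizes plugging in the Patr\'{i}cio--Puystjens inner inverse in the second step.
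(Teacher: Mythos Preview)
Your argument is correct and is exactly the proof of Theorem~\ref{Along a Matrix}, of which this corollary is the immediate specialization the paper obtains in one line by noting that $d_4$ invertible forces $e=1-d_4d_4^{-1}=0$ and $f=1-d_4^{-1}d_4=0$ (so $d_3f=0$ holds automatically, $M$ is diagonal, $t=0$, and the $(1-tt^{-})e$-terms drop out). You should state that reduction explicitly, since as written you invoke $d_3f=0$ without justification and your computed $I-MM^{-}$ still carries the $e,t$ terms, so your $\alpha,\beta$ do not literally match the simplified forms in the statement until $e=0$ is used.
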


In Theorem \ref{Along a Matrix}, take $d_3=0$, then $s=d_1$.  We can get the formula and equivalence for the existence of the inverse along a regular lower triangular matrix obtained in \cite{Mary and Patricio 2}.

\begin{corollary} {\rm (\cite[Theorem 3.1]{Mary and Patricio 2})} Let $A=\begin{bmatrix}
              a & c \\
              b & d
        \end{bmatrix}$, $D=\begin{bmatrix}
              d_1 & 0 \\
              d_2 & d_4
        \end{bmatrix}\in R_{2 \times 2}$ with $d_4$ regular.
 With the notations above, if $a^{\parallel d_1}$ exists, then $A^{\parallel D}$ exists
 if and only if $\xi=\beta-\alpha a^{\parallel d_1}c$ is invertible.

 In this case, $A^{\parallel D}=\begin{bmatrix}
              a^{\parallel d_1} &~~~-a^{\parallel d_1}c\xi^{-1}d_4\\
              \xi^{-1}(d_2-\alpha a^{\parallel d_1}) &~~~ \xi^{-1}d_4
        \end{bmatrix}$, where
 \begin{eqnarray*}
 u   & = & d_1a+1-d_1d_1^{+},\\
 t  & = & ed_2(1-d_1^{+}d_1),\\
 \alpha  & = &  d_2a+d_4b-(1-tt^{-})ed_2d_1^{+}, \\
 \beta & = & d_2c+d_4d+(1-tt^{-})e,\\
 \xi  & = & \beta-\alpha a^{\parallel d_1}c.
\end{eqnarray*}
\end{corollary}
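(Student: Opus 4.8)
The plan is to deduce this corollary from Theorem~\ref{Along a Matrix} by putting $d_3=0$. In that case the hypothesis $d_3f=0$ of Theorem~\ref{Along a Matrix} holds automatically, and $s=d_1-d_3d_4^{+}d_2=d_1$, so the standing assumption ``$a^{\parallel s}$ exists'' becomes ``$a^{\parallel d_1}$ exists''. Substituting $d_3=0$ and $s=d_1$ into the data of Theorem~\ref{Along a Matrix}, the summand $(d_2a+d_4b)d_3d_4^{+}$ in $\beta$ drops out and $ad_3d_4^{+}+c$ reduces to $c$; what is left is $u=d_1a+1-d_1d_1^{+}$, $t=ed_2(1-d_1^{+}d_1)$, $\alpha=d_2a+d_4b-(1-tt^{-})ed_2d_1^{+}$, $\beta=d_2c+d_4d+(1-tt^{-})e$ and $\xi=\beta-\alpha a^{\parallel d_1}c$, exactly as in the statement. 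Hence the equivalence ``$A^{\parallel D}$ exists $\Leftrightarrow$ $\xi$ is invertible'' is immediate, and only the expression for $A^{\parallel D}$ needs attention.

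For that I would first note that $d_3=0$ forces $x_1=[(1-a^{\parallel d_1}a)d_3d_4^{+}-a^{\parallel d_1}c]\xi^{-1}=-a^{\parallel d_1}c\xi^{-1}$ and therefore $x_2=u^{-1}-x_1\alpha u^{-1}=u^{-1}+a^{\parallel d_1}c\xi^{-1}\alpha u^{-1}$. Feeding these into the matrix supplied by Theorem~\ref{Along a Matrix}, the $(1,2)$, $(2,1)$ and $(2,2)$ entries are read off at once as $x_1d_4=-a^{\parallel d_1}c\xi^{-1}d_4$, $\xi^{-1}(d_2-\alpha a^{\parallel d_1})$ and $\xi^{-1}d_4$. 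The $(1,1)$ entry is $x_1d_2+x_2s=x_1d_2+x_2d_1$, and using Corollary~\ref{m and p theorem} in the form $a^{\parallel d_1}=u^{-1}d_1=d_1v^{-1}$ (where $u=d_1a+1-d_1d_1^{+}$, $v=ad_1+1-d_1^{+}d_1$) one computes
\[
x_1d_2+x_2d_1=u^{-1}d_1+a^{\parallel d_1}c\,\xi^{-1}(\alpha u^{-1}d_1-d_2)=a^{\parallel d_1}+a^{\parallel d_1}c\,\xi^{-1}(\alpha a^{\parallel d_1}-d_2).
\]

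The step I expect to be the main obstacle is to collapse this to $a^{\parallel d_1}$, i.e.\ to show that the correction term $a^{\parallel d_1}c\,\xi^{-1}(\alpha a^{\parallel d_1}-d_2)$ is zero. To do so I would work with the characterizing identities of $g:=a^{\parallel d_1}$, namely $d_1ag=gad_1=d_1$, $gag=g$ and $g\in d_1R\cap Rd_1$, together with the relation $\xi=\beta-\alpha gc$, the idempotent identity $(1-tt^{-})ed_2(1-d_1^{+}d_1)=0$ (which follows from $tt^{-}t=t$ since $t=ed_2(1-d_1^{+}d_1)$), and $d_1^{+}g=v^{-1}ag$ (which follows from $v^{-1}ad_1=d_1^{+}d_1$ and $g=d_1v^{-1}$); the aim is to rewrite $\alpha g-d_2$ in a form annihilated on the left by $gc\,\xi^{-1}$. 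Once the $(1,1)$ entry is pinned down this way, the displayed formula for $A^{\parallel D}$ follows and everything else is routine bookkeeping.
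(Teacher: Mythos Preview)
Your reduction is exactly the paper's: set $d_3=0$ in Theorem~\ref{Along a Matrix} so that $s=d_1$, and read off the data. Your bookkeeping for $u$, $t$, $\alpha$, $\beta$, $\xi$, $x_1$, $x_2$ and for the $(1,2)$, $(2,1)$, $(2,2)$ entries is correct, and the equivalence ``$A^{\parallel D}$ exists $\Leftrightarrow$ $\xi$ invertible'' is immediate.

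The gap is precisely where you anticipate it, and it is not fillable: the correction term $a^{\parallel d_1}c\,\xi^{-1}(\alpha a^{\parallel d_1}-d_2)$ is \emph{not} zero in general, so the $(1,1)$ entry produced by Theorem~\ref{Along a Matrix} does not collapse to $a^{\parallel d_1}$. Over $R=\mathbb{R}$ take $d_1=1$, $d_2=2$, $d_4=0$ (so $e=1$) and $a=c=d=1$, $b=3$. Then $s=d_1=1$, $a^{\parallel d_1}=1$, $t=0$, $\alpha=2-2=0$, $\beta=2+0+1=3$, $\xi=3$, and the correction term equals $1\cdot 1\cdot \tfrac{1}{3}\cdot(0-2)=-\tfrac{2}{3}\neq 0$. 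A direct computation confirms $A^{\parallel D}=\begin{bmatrix}1/3&0\\2/3&0\end{bmatrix}$, whose $(1,1)$ entry is $1/3$, not $a^{\parallel d_1}=1$; the other three entries do match the stated formula. The identities you list ($gag=g$, $(1-tt^{-})t=0$, $d_1^{+}g=v^{-1}ag$) are all valid but cannot force the term to vanish, since it does not.

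In short, your plan coincides with the paper's one-line specialization, and your computation of the $(1,1)$ entry as $a^{\parallel d_1}+a^{\parallel d_1}c\,\xi^{-1}(\alpha a^{\parallel d_1}-d_2)$ is the correct output of Theorem~\ref{Along a Matrix}; the displayed $(1,1)$ entry $a^{\parallel d_1}$ in the corollary is a misprint rather than a consequence you are failing to derive.
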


By taking $ed_2=0$ in Theorem \ref{Along a Matrix}, we get the following corollary.

\begin{corollary}
Let $A=\begin{bmatrix}
              a & c \\
              b & d
        \end{bmatrix}$, $D=\begin{bmatrix}
              d_1 & d_3 \\
              d_2 & d_4
        \end{bmatrix}\in R_{2 \times 2}$ with $d_4$ regular.
 With the notations above, if $ed_2=d_3f=0$ and $a^{\parallel s}$ exists, then $A^{\parallel D}$ exists
 if and only if $\xi=\beta-\alpha a^{\parallel s}(ad_3d_4^{+}+c)$ is invertible.

 In this case, $A^{\parallel D}=\begin{bmatrix}
               x_1d_2+ x_2s &~~~ x_1d_4 \\
              \xi^{-1}(d_2-\alpha a^{\parallel s}) & ~~~\xi^{-1}d_4
        \end{bmatrix}$, where
 \begin{eqnarray*}
 u   & = & sa+1-ss^{+},\\
 \alpha  & = &  d_2a+d_4b, \\
 \beta & = & \alpha d_3d_4^{+}+d_2c+d_4d+e,\\
 \xi & = & \beta-\alpha a^{\parallel s}(ad_3d_4^{+}+c),\\
  x_1 & = &[(1 -a^{\parallel s}a)d_3d_4^{+}-a^{\parallel s}c]\xi^{-1},\\
 x_2  & = &  u^{-1}-x_1 \alpha u^{-1}.
\end{eqnarray*}
\end{corollary}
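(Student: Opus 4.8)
The plan is to obtain this corollary as a direct specialization of Theorem \ref{Along a Matrix}, so that the whole task reduces to tracking how the extra hypothesis $ed_2=0$ simplifies the data $t,\alpha,\beta$ appearing there. Since $d_3f=0$ is assumed and $a^{\parallel s}$ exists, Theorem \ref{Along a Matrix} applies verbatim: $A^{\parallel D}$ exists if and only if $\xi=\beta-\alpha a^{\parallel s}(ad_3d_4^{+}+c)$ is invertible, and in that case $A^{\parallel D}=PU^{-1}MQ$ has the displayed block form with $x_1,x_2$ as stated. Hence the only thing left is to rewrite $t,\alpha,\beta$ under $ed_2=0$.

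First I would note that $t=ed_2(1-s^{+}s)=0$, since $ed_2=0$. Therefore $t$ is (trivially) regular, and $tt^{-}=0$ for every choice of inner inverse, so $1-tt^{-}=1$. Feeding this back into the formulas of Theorem \ref{Along a Matrix}, the summand $(1-tt^{-})ed_2s^{+}$ in $\alpha$ vanishes (both because $1-tt^{-}=1$ is inert here and, more to the point, because $ed_2=0$), giving $\alpha=d_2a+d_4b$; and the summand $(1-tt^{-})e$ in $\beta$ collapses to $e$, so that $\beta=(d_2a+d_4b)d_3d_4^{+}+d_2c+d_4d+e=\alpha d_3d_4^{+}+d_2c+d_4d+e$. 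The quantities $u=sa+1-ss^{+}$, $\xi$, $x_1$, $x_2$ keep exactly the form from Theorem \ref{Along a Matrix}, now with $\alpha,\beta$ replaced by these simplified expressions, and substituting them into the expression for $PU^{-1}MQ$ yields the claimed formula for $A^{\parallel D}$.

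Alternatively, and perhaps more transparently, I would redo the short computation directly with the block matrix to which $M$ collapses once $d_3f=0$ and $ed_2=0$, namely $M=\begin{bmatrix} s & 0 \\ 0 & d_4 \end{bmatrix}$. Then one may take $M^{-}=\begin{bmatrix} s^{-} & 0 \\ 0 & d_4^{+} \end{bmatrix}$, so that $I-MM^{-}=\begin{bmatrix} 1-ss^{+} & 0 \\ 0 & e \end{bmatrix}$; computing $U=MQAP+I-MM^{-}$ one recognizes the $(1,1)$-entry as $u=sa+1-ss^{+}$, which is invertible with $a^{\parallel s}=u^{-1}s$ because $a^{\parallel s}$ exists, and then the Schur reduction of $U$ carried out exactly as in the proof of Theorem \ref{Along a Matrix} produces the same $U^{-1}$ and hence the same $A^{\parallel D}=PU^{-1}MQ$. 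There is essentially no obstacle: the content is a routine substitution, and the single point deserving a word of care is that $ed_2=0$ forces $t=0$, whence $1-tt^{-}=1$ independently of the choice of inner inverse, so the general formulas genuinely collapse to the stated ones.
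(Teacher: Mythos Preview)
Your proposal is correct and follows exactly the paper's approach: the corollary is stated without a separate proof, merely as the specialization of Theorem~\ref{Along a Matrix} obtained by setting $ed_2=0$, and your write-up carries out precisely that substitution (observing $t=0$, hence $1-tt^{-}=1$, which collapses $\alpha$ and $\beta$ to the displayed forms). The alternative direct computation you sketch, using the diagonal $M$, is also sound but goes slightly beyond what the paper records.
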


\begin{question} {\rm Given a regular matrix $D$, can we give further equivalent conditions such that $A^{\parallel D}$ exists without additional conditions ?}
\end{question}

\bigskip

\centerline {\bf ACKNOWLEDGMENTS}
\vskip 2mm

The first author is grateful to China Scholarship Council for supporting him to purse his further study in University of Minho, Portugal. This research is supported by the FEDER Funds through ¡®Programa Operacional Factores de Competitividade-COMPETE', the Portuguese Funds through FCT- `Funda\c{c}\~{a}o para a Ci\^{e}ncia e Tecnologia', within the project PEst-OE/MAT/UI0013/2014, the National Natural Science Foundation of China (No. 11201063 and No. 11371089), the Specialized Research Fund for the Doctoral Program of Higher Education (No. 20120092110020), the Natural Science Foundation of Jiangsu Province (No. BK20141327), the Foundation of Graduate Innovation Program of Jiangsu Province(No. CXLX13-072) and the Fundamental Research Funds for the Central Universities (No. 22420135011).

\begin{flushleft}
{\bf References}
\end{flushleft}

\end{document}